\newtheorem{thm}{Theorem}[section]
\newtheorem{cor}[thm]{Corollary}
\newtheorem{prop}[thm]{Proposition}
\theoremstyle{definition}
\theoremstyle{definition}
\newtheorem{rem}[thm]{Remark}
\theoremstyle{definition}
\begin{document}
\title{On the regularity of the $2+1$ dimensional Skyrme model}

\author{Dan-Andrei Geba and  Daniel da Silva}
\address{Department of Mathematics, University of Rochester, Rochester, NY 14627}
\address{Department of Mathematics, University of Rochester, Rochester, NY 14627}
\date{\today}

\begin{abstract}
One of the most interesting open problems concerning the Skyrme model of nuclear physics is the regularity of its solutions (\cite{MR2376667}). In this article, we study $2+1$ dimensional equivariant Skyrme maps, for which we prove, using the method of multipliers, that the energy does not concentrate. This is one of the crucial steps towards a global regularity theory. 
\end{abstract}

\subjclass[2000]{35L70, 81T13}
\keywords{Skyrme model, global existence, non-concentration of energy.}
\maketitle

\section{Introduction}
The nonlinear $\sigma$ model of quantum field theory  is given by the Lagrangian
\begin{equation}
L_{\sigma}\,=\,-\frac 12 S^{\mu}_{\mu}, \quad \text{where} \quad S_{\mu\nu}\,=\,g_{AB}\,\partial_\mu U^A \,\partial_\nu U^B
\label{sigma}
\end{equation}
is the pulled-back metric corresponding to $U: M \to N$, a map from a spacetime $(M, m_{\mu\nu})$ to a complete Riemannian manifold $(N, g_{AB})$. The critical points for this Lagrangian, also called harmonic maps or wave maps (depending on whether $m$ is a Riemannian or Lorentzian metric), have been a major research theme in the general area of partial differential equations for many years (see the excellent surveys: \cite{MR1913803} for harmonic maps and \cite{MR2043751}, \cite{MR2175916}, and \cite{MR2488946} for wave maps).

For the purposes of this article, we specialize our discussion only to wave maps, with $(M,m)=(\mathbb{R}^{n+1}, \text{diag}(-1,1,\ldots,1))$, which have a priori a conserved energy, 
\begin{equation}
E(U)\,=\,\frac 12 \,\int_{\mathbb{R}^n}\,|U_t|^2_g\,+\,|\nabla_x U|^2_g\ dx.
\label{en}
\end{equation}
The case when $n=3$ and $N=\mathbb{S}^3\subset\mathbb{R}^4$ is referred by the physicists as the \emph{classical} nonlinear $\sigma$ model (\cite{MR0118418}, \cite{MR0140316}, \cite{MR0159685}), describing the interactions between nucleons and pions. For finite energy wave maps, one can naturally associate to them a  winding number
\begin{equation}
Q\,=\, c\, \int_{\mathbb{R}^3}\, \epsilon_{ijk}\,\partial_a U^i\, \partial_b U^j\, \partial_c U^k \,\epsilon^{abc}\,dx,
\label{Q}
\end{equation}
where $\epsilon$ is the Levi-Civita symbol and $c$ is a normalizing constant such that $Q$ takes integer values. As the energy, this is also a conserved quantity (or \emph{topological charge} in physical terminology), and it was Tony Skyrme's revolutionary idea which saw it as nothing but the baryon number of the nucleus, making it the first example of a \emph{topological soliton}.

From the mathematical point of view, one of the fundamental questions related to wave maps is their global regularity, of particular interest being the classical model and the one corresponding to  $n=2$ and $N=\mathbb{S}^2\subset\mathbb{R}^3$. For the former, Shatah (\cite{MR933231}) showed that singularities form in finite time, even for smooth initial data, which is in accord with the physical intuition that, due to the attractive nature of the forces between pions, finite energy initial configurations of topological charge equal to one would shrink to a point, leading to a singularity formation. The $2+1$ dimensional case proved much more challenging and the same result was obtained only recently (e.g., \cite{MR2372807}, \cite{MR2680419}, and \cite{RR}). 

Another major contribution of Skyrme is his unifying model for mesons and baryons (\cite{MR0128862}, \cite{MR0153323}, \cite{MR0138394}), which is a generalization of the classical model, with
\begin{equation}
L_{S}\,=\,L_\sigma \,+\, \frac{\alpha^2}{4} (S^{\mu\nu} S_{\mu\nu}\,-\,S^{\mu}_{\mu}S^{\nu}_{\nu}),\label{S}
\end{equation}
where $\alpha$ is a constant having the dimension of length. This theory is meant to fix the shortcomings of the classical model pointed out by Shatah; it has stable topological solitons, also called \emph{skyrmions}. Therefore, the mathematical expectations with respect to the Euler-Lagrange equations associated to \eqref{S}, are that finite energy initial data evolve into global smooth solutions, which is one of the most interesting open problems concerning the Skyrme model (\cite{MR2376667} and references therein).  Numerical evidence supporting this claim can be found in \cite{BCR}.  

In this article, we take the first step towards proving  global regularity for a Skyrme theory in $2+1$ dimensions (with $N=\mathbb{S}^2\subset\mathbb{R}^3$), which, in order to be Lorentz invariant and to have stable soliton-like solutions, needs an additional potential term in \eqref{S}:
\begin{equation}
\tilde{L}_S\,=\,L_S \,-\,V(U).
\label{S2}
\end{equation}
The choices for $V$ vary in literature, e.g.,  $V(U)= \lambda^2 (1-n\cdot U)$ \cite{MR1449544} or  $V(U)= \lambda^2 (1-n\cdot U)^2$ \cite{MR2070206}, where $\lambda$ is a coupling constant having the inverse dimension of length and $n=(0,0,1)$ is the north pole of $\mathbb{S}^2$. However, the physical insight says that in what concerns the short-distance behavior (e.g., the development of singularities), the presence/absence of this new term is irrelevant, which is shown precisely later in Remark \ref{pot}. This is why we decide to ignore the potential and work with the original Skyrme Lagrangian \eqref{S}.
 
We study equivariant critical maps of co-rotation index 1 associated to $L_S$, i.e.,
\[
U(t,r,\omega)= (u(t,r),\omega), \qquad u(t,0)=0, \qquad u(t,\infty)= \pi, 
\]
where $u$ is the longitudinal angle and $(r,\omega)$ are the polar coordinates on $\mathbb{R}^2$. The Euler-Lagrange equations yield the following quasilinear wave equation for $u$:
\begin{equation}
\label{meq}
\aligned
\left(1+\frac{\alpha^2\sin^2u}{r^2}\right)&(u_{tt}-u_{rr}) - \left(1-\frac{\alpha^2\sin^2u}{r^2}\right)\frac{u_r}{r}\\
&+ \frac{\sin 2u}{2r^2}\left[\alpha^2\left(u_t^2-u_r^2\right) + 1\right] = 0.
\endaligned
\end{equation}
A formal calculus shows that  the conserved energy norm is given by
\begin{equation}
\mathcal{E}[u](t)=\int_0^\infty \left[\left(1 + \frac{\alpha^2\sin ^2 u}{r^2}\right)\frac{u_t^2+u_r^2}{2}+
\frac{\sin ^2 u}{2r^2}
\right]\,r dr. \label{tote}
\end{equation}

We stress here that what we are after is a large data theory, for which the usual strategy is to combine a small data global well-posedness result with an argument that rules out energy concentration. In this article we achieve the latter, which is the more involved and non-standard part of the proof, while the well-posedness part is addressed, more generally, in an upcoming paper \cite{GNR}.

\section{Preliminaries}
Our main equation \eqref{meq} is only at first glance quasilinear, as one can divide by the coefficient of the leading terms, which is non-degenerate,  and obtain a $1+1$ dimensional semilinear wave equation, for which the short-time existence of smooth solutions is, by now, standard.  Moreover, from the radial symmetry, it is clear that if  \eqref{meq} develops a singularity, this has to appear at $r=0$. 

Therefore, as the equation is reversible in time and it is invariant under time translations, we can assume, without any loss of generality, that our solution starts at time $t =T_0>0$ and the origin $(t,r)=(0, 0)$ is the first possible blow-up point. Using a finite speed of propagation argument, we can thus prescribe initial data at $t=T_0$ and $r\leq T_0$, and solve \eqref{meq} backward in time in 
\[
\Omega\,=\,\{(t,r)|\ 0 < t\leq T_0,\, r \leq t\}.
\]
The local theory allows us also to assume that $u$ is smooth in $\Omega$. Under these conditions, our main result can be formulated as follows:
\begin{thm}
For $u$ a smooth solution of \eqref{meq} in $\Omega$, the energy doesn't concentrate at the origin, i.e.,
\begin{equation}
\lim_{T\to 0+}\,\int_0^T \left[\left(1 + \frac{\alpha^2\sin ^2 u}{r^2}\right)\frac{u_t^2+u_r^2}{2}+
\frac{\sin ^2 u}{2r^2}
\right]\,r dr\,=\,0.
\label{nce}
\end{equation}
\label{eth}
\end{thm}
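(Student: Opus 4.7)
The plan is to adapt the classical multiplier method of Shatah--Tahvildar-Zadeh and Struwe for $2+1$ equivariant wave maps to the Skyrme equation \eqref{meq}, paying careful attention to the quartic derivative terms produced by the $\alpha^2$ correction. Two multipliers drive the argument: the timelike multiplier $ru_t$, which yields the local energy/flux identity, and the scaling multiplier $r(tu_t+ru_r)$, adapted to the blow-up point $(0,0)$.

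\textbf{Energy identity and flux decay.} Multiplying \eqref{meq} by $ru_t$ and integrating over the truncated cone $K_T = \Omega \cap \{t \geq T\}$, the divergence theorem should produce
\begin{equation*}
\mathcal{E}[u](T) + \flux{T} = \mathcal{E}[u](T_0),
\end{equation*}
where $\flux{T}$ is a boundary integral over the mantle $M_T = \{r=t,\ T\le t\le T_0\}$. After completing squares in the null frame $\p_t \pm \p_r$, I expect the integrand of $\flux{T}$ to emerge as a nonnegative quadratic combination of $(u_t+u_r)^2$, $\sin^2 u/r^2$, and their $\alpha^2$-weighted versions. Consequently $T \mapsto \mathcal{E}[u](T)$ is monotone nondecreasing, and finite total energy forces a limit $\mathcal{E}_\infty \ge 0$ with $\flux{T_1}-\flux{T_2}\to 0$ as $T_1,T_2 \to 0+$.

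\textbf{Morawetz identity.} Next, I multiply \eqref{meq} by $r(tu_t+ru_r)$ and integrate by parts on $K_T$. The result decomposes into boundary terms on $D_{T_0}$, $D_T$, and $M_T$, plus a bulk spacetime integral. The $D_T$ contribution is bounded by $T\cdot \mathcal{E}[u](T)$ and vanishes in the limit, the $M_T$ contribution is controlled by the flux decay above, and the bulk integral, once the Skyrme quartic pieces are grouped against the coefficient $A := 1+\alpha^2\sin^2 u/r^2$ already present in \eqref{tote}, should reduce to a sum of spacetime integrals of definite sign. In particular, I expect to extract a bound
\begin{equation*}
\int_{K_{T_0}} \frac{\sin^2 u(t,r)}{r^2}\,dr\,dt + \int_{K_{T_0}} A(t,r)\,\frac{(u_t+u_r)^2}{2}\,dr\,dt < \infty.
\end{equation*}

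\textbf{Conclusion and main obstacle.} Together with the smoothness of $u$ and the boundary condition $u(t,0)=0$ (so that $|\sin u|\gtrsim |u|$ near the axis), the integrability above permits a Hardy-type argument forcing $\mathcal{E}[u](T)\to 0$ as $T\to 0+$, i.e., $\mathcal{E}_\infty = 0$. The hard part is the Morawetz step: the quartic nonlinearity in \eqref{meq} generates bulk terms with no obvious sign, and making them cooperate relies on recognizing that the energy density in \eqref{tote} contains precisely the $A$-weighted combination needed to absorb them. The same completion-of-squares must be performed to recognize positivity of $\flux{T}$ in the first place.
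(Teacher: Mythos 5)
Your general framework (multiplier method, energy/flux identity from $ru_t$, then a dilation-type multiplier) is the right one and matches the paper's starting point, but the plan as written has a genuine gap precisely at the step you flag as ``the hard part,'' and the gap is not merely computational. First, a single Morawetz identity with multiplier $r(tu_t+ru_r)$ does not produce the full energy density in its bulk term. In the paper the pieces $rtu_t$ and $r\cdot ru_r$ are used separately: the first yields only the normalization $\lim_{T\to 0+}\frac1T\int_{K(0,T)}e=l$ (equation \eqref{neq}), and the second kills only the combination $u_t^2+\frac{\alpha^2\sin^2u}{r^2}u_r^2$ (equation \eqref{utsinur}) --- note the cross-pairing of weights. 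The remaining densities $u_r^2$ and $\frac{\alpha^2\sin^2u}{r^2}u_t^2$ require two further multipliers, $ru$ (identity \eqref{ru}) and $r\,w\sin u$ (identity \eqref{abc} with $h(u)=\sin u$), and the last of these needs a refinement of the pointwise bound $|u(t,r)|\lesssim r^{1/2}$ along a subsequence $t_n\to 0$ (estimate \eqref{dcy3}) to beat the boundary term $\frac1T\int_{\Sigma(T)}w\sin u\,u_t$. No completion of squares against the coefficient $A=1+\alpha^2\sin^2u/r^2$ makes all four densities appear with a sign in one identity; this is exactly where the Skyrme term breaks the wave-map symmetry.

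Second, two of your intermediate claims are too weak to close the argument. (i) The boundary term on $\Sigma(T)$ coming from the $ru_r$-part of the dilation multiplier is $\int_{\Sigma(T)}rm$, and after dividing by $T$ this is controlled only by $\int_{\Sigma(T)}\frac{r}{T}e$; making that vanish requires non-concentration of energy near the light cone (estimates \eqref{diam1}--\eqref{diam2} on the null derivatives of $r(e\pm m)$, leading to \eqref{nec} and \eqref{rt}), a step absent from your outline and not implied by flux decay alone. (ii) The conclusion you extract from the Morawetz step --- finiteness of spacetime integrals over the fixed cone $K_{T_0}$ --- does not rule out concentration at the tip; what is needed is the scale-invariant decay $\frac1T\int_{K(0,T)}(\cdots)\to 0$, which, combined with \eqref{neq}, forces $l=0$. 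A Hardy-type argument from $u(t,0)=0$ does give continuity of $u$ at the origin and non-concentration of the potential term $\frac{\sin^2u}{2r^2}$ (Proposition \ref{cont} and \eqref{sinu}), but it says nothing about the derivative terms in the energy.
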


Our energy analysis is mainly done on forward truncated cones, their mantels, and their spacelike sections, denoted by
\[
\aligned
K(t_1,t_2)\,&:=\,\{(t,r)|\ t_1\leq t\leq t_2,\, 0\leq r \leq t\},\\
C(t_1,t_2)\,&:=\,\{(s,s)|\ t_1\leq s\leq t_2\},\\
\Sigma(t_1)\,&:=\,\{(t_1,r)|\ 0\leq r \leq t_1\},
\endaligned
\]
where $0 <  t_1 \leq t_2 \leq T_0$. For narrower sections, we use 
\[
\Sigma_\lambda(t_1)\,:=\,\{(t_1,r)|\ \lambda t_1\leq r \leq t_1\},
\]
with $0< \lambda < 1$.

Next, using the notation
\[
w:= 1+ \frac{\alpha^2\sin^2 u}{r^2}, \qquad
e:= w\frac{u_t^2+u_r^2}{2} + \frac{\sin^2 u}{2r^2}, \qquad
m:=wu_tu_r,
\]
where $e$ and $m$ are also called the \emph{energy density}, respectively the \emph{momentum density}, we can record our first

\begin{prop}
Classical solutions of \eqref{meq} satisfy the following two differential identities:

\begin{equation}
\aligned \partial_t\left(r\left[ a\,e + b\,m+ c\, w\, h(u) u_t \right]\right) &-\partial_r\left(r \left[a\,m+b\left(e-\frac{\sin^2u}{r^2}\right)+ c\,w\,h(u) u_r\right]\right) \\
= \, r\,\Bigg\{ (A+B) \frac{u_{t}^2}{2} + 
(A-B&) \frac{u_{r}^2}{2} + \, (b_t-a_r)m +\,\left(a_t + b_r- \frac{b}{r}\right)\,\frac{\sin^2u}{2r^2} \\
-c\,\frac{h(u)\sin2u}{2r^2} &+ \,w\,h(u)(c_t u_t-c_r u_r)\Bigg\},
\endaligned \label{abc}
\end{equation}
where 
\[
\aligned
a=a(t,r), \qquad b=b(t,r), \qquad &c=c(t,r), \qquad h=h(u),\\
A\,:=\,w(a_t-b_r), \quad B\,:=\,-\left(1- \frac{\alpha^2\sin^2 u}{r^2}\right)&\frac{b}{r} +
c\left(2wh'(u)+\frac{\alpha^2h(u)\sin 2u}{r^2}\right),
\endaligned
\]

and

\begin{equation}
\aligned &\partial_t\left(r\,u\,u_t \right) -\partial_r \left(r \,u \,u_r\right) \\
= \, r\,\Bigg\{ &\left[1-\frac{\frac{\alpha^2 u\sin 2u}{2r^2}}{w}\right]\,(u_{t}^2 -u_r^2) +\left[\frac{1-\frac{\alpha^2 \sin^ 2u}{r^2}}{w}-1\right]\,\frac{uu_r}{r} -\frac{\frac{ u\sin 2u}{2r^2}}{w}\Bigg\}.
\endaligned \label{ru}
\end{equation}

\label{mty}
\end{prop}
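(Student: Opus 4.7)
The plan is to derive both identities as multiplier identities for \eqref{meq}: identity \eqref{ru} via the multiplier $r\,u$, and identity \eqref{abc} via the composite multiplier $r(au_t + bu_r + c\,h(u))$.

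Identity \eqref{ru} is the most direct. A straightforward computation yields
\[
\partial_t(r\,u\,u_t) - \partial_r(r\,u\,u_r) = r\,u(u_{tt}-u_{rr}) + r(u_t^2-u_r^2) - u\,u_r.
\]
Since $w = 1 + \alpha^2\sin^2u/r^2$ is nondegenerate, \eqref{meq} gives
\[
u_{tt}-u_{rr} = \frac{2-w}{w}\,\frac{u_r}{r} - \frac{\sin 2u}{2\,r^2\,w}\bigl[\alpha^2(u_t^2-u_r^2)+1\bigr].
\]
Substituting this and combining the $u\,u_r$ contributions (using $(2-w)/w - 1 = (2-2w)/w$) recovers the right-hand side of \eqref{ru}.

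For \eqref{abc}, the plan is to first establish three building-block divergence identities by multiplying \eqref{meq} against $r\,u_t$, $r\,u_r$, and $r\,h(u)$, respectively. Using $w_t = (\alpha^2\sin 2u/r^2)u_t$, $w_r = (\alpha^2\sin 2u/r^2)u_r - 2\alpha^2\sin^2u/r^3$, and the algebraic identity $\alpha^2\sin^2u/r^2 = w-1$, a careful (but mechanical) calculation produces
\begin{align*}
\partial_t(re) - \partial_r(rm) &= 0,\\
\partial_t(rm) - \partial_r\bigl(r(e-\sin^2u/r^2)\bigr) &= \frac{(2-w)(u_r^2-u_t^2)}{2} - \frac{\sin^2u}{2r^2},\\
\partial_t\bigl(rwh(u)u_t\bigr) - \partial_r\bigl(rwh(u)u_r\bigr) &= r\left[wh'(u) + \frac{\alpha^2 h(u)\sin 2u}{2r^2}\right](u_t^2-u_r^2) - \frac{h(u)\sin 2u}{2r}.
\end{align*}
The first is the usual energy conservation; the second is its momentum counterpart; the third arises by applying the chain rule to $\partial_t(wh(u))$ and $\partial_r(wh(u))$ and then reducing $\alpha^2\sin^2u/r^2$ to $w-1$.

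We then multiply these three identities by $a(t,r)$, $b(t,r)$, $c(t,r)$ and add, using the product rule to absorb the derivatives of the coefficients (for example, $\partial_t(r\,a\,e) - \partial_r(r\,a\,m) = a_t\cdot re - a_r\cdot rm$). To match the form stated in \eqref{abc}, one further rewrites $re$ and $rw(u_t^2+u_r^2)/2$ as $r(e-\sin^2u/r^2)$ plus correction terms proportional to $\sin^2u/r$; this is what generates the coefficient $a_t+b_r-b/r$ in front of $\sin^2u/(2r^2)$. The main obstacle is purely combinatorial: one must check that the numerous contributions from $w_t$, $w_r$, the chain rule for $h$, and the momentum identity cancel and recombine exactly into the advertised coefficients $A$, $B$, and the remaining structure constants. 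The calculation is lengthy but entirely mechanical.
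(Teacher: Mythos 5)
Your proposal is correct and follows the same route as the paper, which likewise obtains \eqref{ru} by solving \eqref{meq} for $u_{tt}-u_{rr}$ and obtains \eqref{abc} by multiplying the equation by $r(au_t+bu_r+c\,h(u))$; your three building-block identities are exactly the $(a,b,c)=(1,0,0)$, $(0,1,0)$, $(0,0,1)$ cases, and I have checked that they and the subsequent recombination (including the $a_t+b_r-b/r$ coefficient) work out as you claim. No gaps.
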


\begin{proof}
The first identity is obtained when we multiply our equation by $r(a u_t + b u_r + c h(u))$ and rearrange the terms conveniently,  while the second one follows by rewriting $u_{tt}-u_{rr}$ using directly \eqref{meq}.
\end{proof}

For $(a,b,c)=(1,0,0)$,  \eqref{abc} is the energy differential identity
\begin{equation}
\partial_t(re)\,-\,\partial_r(rm)  \,=\, 0, 
\label{en}
\end{equation}
which, integrated on $K(t_1,t_2)\subset \Omega$, yields
\[
E(t_2)\,-\,E(t_1)\,=\,F(t_1,t_2).
\]
Here
\[
E(T)\,=\,\int_{\Sigma(T)}\,e \qquad \text{and} \qquad F(S,T)\,=\,\frac{1}{\sqrt 2}\int_{C(S,T)}\,e+m\]
are  the \emph{energy} of time slice $t=T$, respectively the \emph{flux} between the time slices $t=S$ and $t=T$ for the function $u$\footnote{This is the motivation for the terminology associated with $e$ and $m$.}. We thus obtain:

\begin{prop}
The energy is monotone and the flux decays to 0, i.e.,
\begin{equation}\label{mone}
E(t_1)\leq E(t_2)\quad \text{for}\quad 0 <  t_1 \leq t_2 \leq T_0
\end{equation}
\begin{equation}
\lim_{T\to 0+}F(0,T)\,=\,0 \label{flux}
\end{equation}
\label{ef}
\end{prop}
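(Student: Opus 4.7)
The plan is to apply the divergence theorem to the energy identity \eqref{en} over the truncated cone $K(t_1,t_2)$. The contributions from the two spacelike caps $\Sigma(t_1)$ and $\Sigma(t_2)$ produce $-E(t_1)$ and $E(t_2)$, respectively, while the mantel $C(t_1,t_2)$, along which the outward unit normal is $(-1,1)/\sqrt{2}$ and the induced line element is $\sqrt{2}\,ds$, contributes precisely $-F(t_1,t_2)$. This reproduces the identity $E(t_2)-E(t_1) = F(t_1,t_2)$ already recorded in the excerpt, which is the starting point for both parts of the proposition.

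The algebraic heart of the matter is the pointwise non-negativity of the flux integrand. Completing the square yields
\begin{equation*}
e+m \,=\, w\,\frac{(u_t+u_r)^2}{2} \,+\, \frac{\sin^2 u}{2r^2},
\end{equation*}
which is manifestly non-negative since $w = 1 + \alpha^2\sin^2 u/r^2 \geq 1$. Consequently $F(t_1,t_2) \geq 0$, and the monotonicity statement \eqref{mone} follows at once.

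For the flux decay \eqref{flux}, I would observe that $E(T)$ is non-decreasing in $T$ by the first part, bounded above by $E(T_0)$, and bounded below by $0$ (since the energy density $e$ is itself non-negative, as $w\geq 1$). Hence the monotone limit $L := \lim_{T\to 0+} E(T) \geq 0$ exists. Taking $S\to 0+$ in $F(S,T) = E(T)-E(S)$ shows $F(0,T) = E(T) - L$, so $\lim_{T\to 0+} F(0,T) = L - L = 0$. There is no genuine obstacle in this argument; the only step that requires a small observation is the completion of the square giving $e+m\geq 0$, after which both assertions reduce to standard monotone-convergence bookkeeping.
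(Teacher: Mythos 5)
Your proposal is correct and follows the same route the paper intends: integrating the energy identity \eqref{en} over $K(t_1,t_2)$ to get $E(t_2)-E(t_1)=F(t_1,t_2)$, observing that $e+m=w\,\frac{(u_t+u_r)^2}{2}+\frac{\sin^2 u}{2r^2}\geq 0$ so the flux is nonnegative, and then deducing \eqref{flux} from the existence of the monotone limit $l=\lim_{T\to 0+}E(T)$. No issues.
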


\begin{rem}
It follows immediately from \eqref{mone} that  
\begin{equation}
0 \leq \lim_{T\to 0+}E(T)\,=\,l < \infty, \label{ne}
\end{equation}
and so, in order to prove the non-concentration of energy, it suffices to demonstrate $l=0$. 
\end{rem}

We have now all the ingredients to show that:
\begin{prop}
A smooth solution $u$ for \eqref{meq} in $\Omega$ is continuous at $(0,0)$ and 
\begin{equation}
|u(t,r)|\,\leq C(E(T_0)) \,r^{1/2}, \quad (\forall)(t,r) \in \Omega, \, t\ll 1.
\label{dcy}
\end{equation}
\label{cont}
\end{prop}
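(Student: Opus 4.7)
The plan is to leverage the Skyrme-enhanced weight $w=1+\alpha^2\sin^2u/r^2$ in the energy density to get a quantitative $O(r)$ bound on $1-\cos u$, and then to upgrade this to pointwise control on $u$ itself via a Taylor-type lower bound on $1-\cos u$ combined with a continuity bootstrap in $r$.

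First I would record the integral estimate
\begin{equation*}
\int_0^r \frac{\sin^2 u\cdot u_\rho^2}{\rho}\,d\rho\,\leq\,\frac{2E(T_0)}{\alpha^2},\qquad (t,r)\in\Omega,
\end{equation*}
which follows by keeping only the Skyrme term $\frac{\alpha^2\sin^2 u}{r^2}\cdot\frac{u_r^2}{2}\cdot r$ in the integrand of $E(t)$ and invoking \eqref{mone}. Since $u(t,0)=0$ and $\partial_\rho(1-\cos u)=\sin u\cdot u_\rho$, a Cauchy--Schwarz application with weights $\rho^{-1/2}$ and $\rho^{1/2}$ would then give
\begin{equation*}
|1-\cos u(t,r)|\,=\,\bigg|\int_0^r \sin u\cdot u_\rho\,d\rho\bigg|\,\leq\,\bigg(\int_0^r \frac{\sin^2 u\cdot u_\rho^2}{\rho}\,d\rho\bigg)^{1/2}\bigg(\int_0^r\rho\,d\rho\bigg)^{1/2}\,\leq\,C\,r,
\end{equation*}
with $C=C(E(T_0),\alpha)$.

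Next I would convert this into a bound on $u$ using the elementary inequality $1-\cos u\geq 2u^2/\pi^2$, valid for $u\in[-\pi,\pi]$ (which comes from $|\sin(u/2)|\geq|u|/\pi$ on that range). For each fixed $t\in(0,T_0]$, the smoothness of $u$ together with $u(t,0)=0$ ensures that the set $\{\rho\in[0,t]:|u(t,s)|\leq\pi\text{ for all }s\in[0,\rho]\}$ is a nontrivial interval, whose supremum I denote $\rho^*(t)$. On $[0,\rho^*(t)]$ the combined bound reads $|u(t,r)|\leq C'r^{1/2}$ with $C'=\pi\sqrt{C/2}$. A short continuity argument then closes the bootstrap: if $\rho^*(t)<t$ then $|u(t,\rho^*(t))|=\pi$ by continuity, which forces $\rho^*(t)\geq(\pi/C')^2=:r_1$. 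Consequently, for all $(t,r)\in\Omega$ with $r\leq\min(t,r_1)$ one has $|u(t,r)|\leq C'r^{1/2}$, proving \eqref{dcy}. Continuity of $u$ at the origin follows immediately: any sequence $(t_n,r_n)\to(0,0)$ in $\Omega$ satisfies $r_n\leq t_n\to 0$, so $|u(t_n,r_n)|\leq C'r_n^{1/2}\to 0$, yielding the continuous extension with value $0$.

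The main obstacle is conceptual rather than computational: the Cauchy--Schwarz step in the first paragraph succeeds precisely because the Skyrme enhancement places $\sin^2 u/\rho$ next to $u_\rho^2$ as a weight inside the energy. The pure wave-map analog of that step returns only a uniform bound $|1-\cos u|\leq C$ and no decay in $r$, so the entire scheme rests on the extra $\alpha^2\sin^2 u/r^2$ contribution to $w$. Once the $O(r)$ bound is secured, the bootstrap is a standard continuity argument.
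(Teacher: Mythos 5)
Your proof is correct, and it rests on exactly the same mechanism as the paper's: Cauchy--Schwarz applied to $\int_0^r|\sin u|\,|u_\rho|\,d\rho$ against the Skyrme contribution $\frac{\alpha^2\sin^2u}{r^2}\frac{u_r^2}{2}$ to the energy density, together with the monotonicity \eqref{mone}; as you note, this step has no analogue for the pure wave map. The only real difference is the choice of primitive. The paper integrates $|\sin u|\,u_\rho$ and works with $I(z)=\int_0^z|\sin w|\,dw$, which is monotone with $|I(z)|\to\infty$ as $|z|\to\infty$; the bound $|I(u)|\lesssim r$ therefore forces $u\to 0$ at the origin outright, and then $u^2\lesssim |I(u)|$ for small $u$ gives \eqref{dcy} with no further work. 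You integrate $\sin u\cdot u_\rho$ and work with $1-\cos u$, which is bounded and periodic, so the estimate $1-\cos u\leq Cr$ says nothing about $u$ until $u$ has been confined to $[-\pi,\pi]$; that is exactly what your continuity argument on $\rho^*(t)$ supplies, and it is carried out correctly (the dichotomy $\rho^*(t)=t$ or $\rho^*(t)\geq r_1$ is sound). Both routes deliver the same conclusion; the paper's coercive functional makes the localization automatic, while your version makes explicit the quantitative threshold $r_1$ and the role of $1-\cos u\geq 2u^2/\pi^2$.
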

\begin{proof}
For the functional
\[
I(z)=\int_0^z |\sin w|\,dw,
\]
one has
\[
I(0)=0, \qquad |I(z)|>0\,(z\neq 0),\qquad \lim_{|z|\to\infty}
|I(z)|=\infty.
\]
Using the co-rotational hypothesis (i.e., $u(t,0)= 0$), we can write
\[
I(u(t,r))=\int_0^r |\sin u(t,s)|\, u_r(t,s)\,ds,
\]
which implies, based on \eqref{mone},
\begin{equation}
\aligned
|I(u(t,r))|\,&\lesssim \,\left(\int_0^r s\,ds \right)^\frac 12 \
\left(\int_{\Sigma(t)} \,\frac{\sin^2u(t,s)}{s^2}u^2_r(t,s)\right)^\frac 12\\
&\lesssim\, r\,E(t)^{1/2}\,\lesssim\, r\,E(T_0)^{1/2}.
\endaligned
\label{dcy2}\end{equation}

This obviously shows that $u$ is continuous at the origin, i.e.,
\[
\lim_{(t,r)\in\Omega \to (0,0)}\ u(t,r)\,=\, 0.\]
For $t$ sufficiently small, we can then argue that 
\[
u^2(t,r)\, \lesssim\, \int_0^r |\sin u(t,s)|\cdot |u_r(t,s)|\,ds\,\lesssim\, r\,E(T_0)^{1/2}, 
\] 
which proves \eqref{dcy}.
\end{proof}

As an easy consequence, we obtain non-concentration for one of the terms in $e$:
\begin{cor}
For a smooth solution $u$ of \eqref{meq} in $\Omega$, the following holds
\begin{equation}
\lim_{T\to 0+}\,\int_{\Sigma(T)}\,\frac{\sin ^2 u}{2r^2}\,=\,0.
\label{sinu}
\end{equation}
\end{cor}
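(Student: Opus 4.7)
The plan is to invoke the pointwise bound \eqref{dcy} of Proposition \ref{cont} directly. Since $|\sin u| \le |u|$, for $T \ll 1$ and every $0 \le r \le T$ the estimate \eqref{dcy} yields $\sin^2 u(T,r) \le u^2(T,r) \lesssim C(E(T_0))^2\, r$. The singular factor $1/r^2$ is therefore tamed down to $1/r$ at the level of the integrand, and the remaining $r$ from the polar measure cancels it outright:
\[
\int_{\Sigma(T)} \frac{\sin^2 u}{2r^2} \;=\; \int_0^T \frac{\sin^2 u(T,r)}{2r^2}\, r\,dr \;\lesssim\; C(E(T_0))^2 \int_0^T \frac{dr}{2} \;\lesssim\; C(E(T_0))^2\, T,
\]
which tends to $0$ as $T \to 0^+$, proving \eqref{sinu}.

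There is no genuine obstacle here: the corollary simply records that the H\"older-$1/2$ pointwise decay of $u$ at the origin, already produced by Proposition \ref{cont}, is on its own strong enough to absorb the singular weight $1/r^2$ appearing in the potential part of the energy density $e$. The analogous non-concentration claim for the kinetic part of $e$ is the substantive content of Theorem \ref{eth} and should require the full multiplier machinery of \eqref{abc} and \eqref{ru}, not just a pointwise bound on $u$.
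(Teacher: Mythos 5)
Your proof is correct and is precisely the argument the paper intends: the corollary is stated as an ``easy consequence'' of Proposition \ref{cont}, and the derivation is exactly the substitution of the pointwise bound \eqref{dcy} (via $|\sin u|\le |u|$) into the integral, with the polar measure $r\,dr$ cancelling the remaining singularity to leave an integrand that is $O(1)$ over an interval of length $T$. Nothing further is needed.
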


\begin{rem}
The presence of the potential term in \eqref{S} would add to $e$ only bounded expressions, which obviously do not concentrate. For example, if $V(U)= \lambda^2 (1-n\cdot U)$, then
\[
\lim_{T\to 0+}\,\int_{\Sigma(T)}\,\lambda^2(1-\cos u)\,=\,0.\]
\label{pot}
\end{rem}

\begin{rem}
From \eqref{dcy2}, we see that once we have a sequence $(t_n)_n\to 0$ with
\begin{equation}
\lim_{n\to \infty}\,\int_{\Sigma(t_n)}\,  \frac{\sin^2u}{r^2}u^2_r\,=\,0, 
\label{sinur}
\end{equation}
it follows that
\begin{equation}
|u(t_n,r)|\,\leq\, C_n\, r^{1/2}, \quad (\forall)(t_n,r) \in \Sigma(t_n),
\label{dcy3}\end{equation}
with $C_n\to 0$ as $n\to \infty$.
\end{rem}

\begin{rem} For general, finite energy smooth solutions of  \eqref{meq}, we deduce, using the same functional as above, that 
\begin{equation}
|I(u(t,r))|\lesssim \left(\int_0^r \frac{\sin^2u(t,s)}{s}\,ds \right)^\frac 12\cdot
\left(\int_0^r u^2_r(t,s) s\,ds\right)^\frac 12\,
\lesssim \,\mathcal{E}[u](t).\label{li}\end{equation}
This implies that $u$ is a priori bounded, as the energy norm \eqref{tote} is conserved in time.
\end{rem}

\section{Main argument}
We follow closely the blueprint for proving non-concentration of energy, as it was used for equivariant wave maps  in \cite{MR1168115}. First, we show that the energy doesn't concentrate near the light cone, which implies, jointly with \eqref{sinu}, that the energy corresponding to the $u_t^2$ and $ \frac{\sin^2u}{r^2}u^2_r$ terms decays to $0$. This result is then used to obtain non-concentration for the $u_r^2$ density. Finally, we rely on all these previous facts to prove the similar result for the $\frac{\sin^2u}{r^2}u^2_t$ part of the energy. 

Our main tools in this section are the differential identities \eqref{abc} and \eqref{ru}, which are integrated on forward truncated cones $K(t_1,t_2)\subset \Omega$ ($0 <  t_1 \leq t_2 \leq T_0$). The strategy is to show that we can allow first $t_1\to 0$, independently of $t_2$, in the resulting equation, followed subsequently by $t_2\to 0$ in the previously obtained limit. 

We apply this approach first for \eqref{abc} with  $(a,b,c)=(t,0,0)$, deducing
\[
\int_{K(t_1,t_2)}\,e\,=\,t_2\, E(t_2) - t_1\,E(t_1)\,-\,\frac{1}{\sqrt 2}\int_{C(t_1,t_2)}\,t (e+m).\]
Using \eqref{flux} and \eqref{ne}, we obtain
\[
\lim_{t_1\to 0} \,\int_{K(0,t_1)}\,e\,+\,t_1\,E(t_1)\,+\,\int_{C(0,t_1)}\,t (e+m)\,=\,0,\]
which, coupled with the previous equality, yields
\[
\int_{K(0,t_2)}\,e\,=\,t_2\, E(t_2)\,-\,\frac{1}{\sqrt 2} \int_{C(0,t_2)}\,t (e+m).\]
Finally, using  \eqref{flux} again, we infer:
\begin{equation}
\lim_{T\to 0+}\frac{1}{T}\int_{K(0,T)}\,e\,=\lim_{T\to 0+}E(T)\,=\,l, \label{neq}
\end{equation}
which provides us with an equivalent way of showing that $l=0$.

\subsection{Behavior near the cone}
Next, our goal is to show non-concentration of energy near the light cone, i.e.,
\begin{equation}
\lim_{T\to 0+}\int_{\Sigma_\lambda(T)}\,e\,=\,0,\label{nec}
\end{equation}
for any fixed $0 < \lambda < 1$. Reasoning as in \cite{MR1168115}, it turns out that in order to claim \eqref{nec}, we need to prove the following two estimates:
\begin{align}
\left|\partial_\xi(r(e+m))\right|\,\lesssim\, ((e+m)(e-m))^{1/2}, \label{diam1}\\
\left|\partial_\eta(r(e-m))\right|\,\lesssim\, ((e+m)(e-m))^{1/2},
\label{diam2}
\end{align}
where $\eta=t+r$ and $\xi=t-r$ are the classical null coordinates.

For $(a,b,c)=(0,1,0)$, \eqref{abc} can be rewritten as
\[
\partial_t(rm)\,-\,\partial_r(re) \,=\, -\left(1 - \frac{\alpha^2\sin ^2 u}{r^2}\right)\frac{u_t^2-u_r^2}{2}  + \frac{\sin^2u}{2r^2} - \frac{\sin 2u\cdot u_r}{r}\,=\,D,\]
which, coupled with \eqref{en}, implies:
\[
\partial_\xi(r(e+m))\,=\,-\partial_\eta(r(e-m))\,=\,\frac{D}{2}.\]
Therefore, in order to prove \eqref{diam1}-\eqref{diam2}, it is enough to verify
\[
D^2\,\lesssim\,(e+m)(e-m),
\]
which follows immediately from the straightforward bound
\[
D^2\,\lesssim\,\left(1 - \frac{\alpha^2\sin ^2 u}{r^2}\right)^2 (u_t^2-u_r^2)^2  + \frac{\sin^4u}{r^4} + \frac{\sin^2 2u\cdot u^2_r}{r^2} \,\lesssim\,(e+m)(e-m).\]

As an immediate consequence of \eqref{nec}, we obtain
\begin{equation}
\lim_{T\to 0+}\int_{\Sigma(T)}\,\frac{r}{T}\,e\,=\,0,
\label{rt}
\end{equation}
which is the crucial piece of information needed to prove:

\subsection{Non-concentration of energy for the $u_t^2$ and $ \frac{\sin^2u}{r^2}u^2_r$ densities}
We integrate \eqref{abc} for $(a,b,c)=(0,r,0)$ on the truncated cone $K(t_1,t_2)$ to infer
\[
\aligned
\int_{\Sigma(t_2)}\,r m\,+\,&\int_{K(t_1,t_2)}\,u_t^2 + \frac{\alpha^2\sin^2u}{r^2}u^2_r\,=\,\int_{\Sigma(t_1)}\,r m\\
&+\,\frac{1}{\sqrt 2}\int_{C(t_1,t_2)}\,r \left[\left(1 + \frac{\alpha^2\sin ^2 u}{r^2}\right)\frac{(u_t+u_r)^2}{2}-
\frac{\sin ^2 u}{2r^2}
\right].
\endaligned
\]
Reasoning as in the start of this section, we can take $t_1\to 0$ and then divide by $t_2$ to deduce
\[
\aligned
\frac{1}{t_2}\int_{\Sigma(t_2)}\,r m\,+\,\frac{1}{t_2}&\int_{K(0,t_2)}\,u_t^2 + \frac{\alpha^2\sin^2u}{r^2}u^2_r\\
&= \frac{1}{\sqrt{2}t_2}\int_{C(0,t_2)}\,r \left[\left(1 + \frac{\alpha^2\sin ^2 u}{r^2}\right)\frac{(u_t+u_r)^2}{2}-
\frac{\sin ^2 u}{2r^2}
\right].
\endaligned
\]
Using \eqref{rt} and the trivial bounds
\[
|m|\leq e, \qquad \left|\left(1 + \frac{\alpha^2\sin ^2 u}{r^2}\right)\frac{(u_t+u_r)^2}{2}-
\frac{\sin ^2 u}{2r^2}\right| \leq e+m,\]
we conclude that
\begin{equation}
\lim_{T\to 0+}\frac{1}{T}\int_{K(0,T)}\,u_t^2 + \frac{\alpha^2\sin^2u}{r^2}u^2_r\,=\,0.\label{utsinur}
\end{equation}

\subsection{Non-concentration of energy for the $u_r^2$ density}
This is the point in the argument where we rely on \eqref{ru}, which integrated on $K(t_1,t_2)$, leads to
\begin{equation}
\aligned
\int_{\Sigma(t_2)}\,u&\, u_t\,-\,\int_{\Sigma(t_1)}\,u\, u_t\,-\,\frac{1}{\sqrt 2}\int_{C(t_1,t_2)}\, u (u_t+u_r)\\
&=\,\int_{K(t_1,t_2)}\,\left[1-\frac{\frac{\alpha^2 u\sin 2u}{2r^2}}{w}\right]\,(u_{t}^2 -u_r^2) +\left[\frac{1-\frac{\alpha^2 \sin^ 2u}{r^2}}{w}-1\right]\,\frac{uu_r}{r} -\frac{\frac{ u\sin 2u}{2r^2}}{w}.
\endaligned
\label{ur0}
\end{equation}
Based on Proposition \ref{cont}, if we choose $t_2$ sufficiently small, then $u$ is small and we can use in $K(0,t_2)$ both \eqref{dcy} and the uniform bound $|u|\lesssim |\sin u|$, which allows us morally to think of $u$ as $\sin u$, everywhere in the above equation. 

We discuss first the integrals  on the bases of the cone and on its mantle. Using the Cauchy-Schwartz inequality, for $t\leq t_2$, we obtain the following set of estimates:
\[
\aligned
\left|\int_{\Sigma(t)} \,u\, u_t\right|\,&\lesssim\, t^2 \left(\int_{\Sigma(t)}\, \frac{\sin^2u}{r^2}u^2_t \right)^{1/2}\,\lesssim\, t^2 E(t)^{1/2},\\
\left|\int_{C(0,t)} \,u\, (u_t+u_r)\right|\,&\lesssim\, t^2 \left(\int_{C(0,t)}\, \frac{\sin^2u}{r^2}(u_t+u_r)^2 \right)^{1/2}\,\lesssim\, t^2 F(0,t)^{1/2},\endaligned
\]
which imply
\begin{equation}
\lim_{T\to 0+}\frac{1}{T}\,\int_{\Sigma(T)} \,u\, u_t\,=\,\lim_{T\to 0+}\frac{1}{T}\,\int_{C(0,T)} \,u\, (u_t+u_r)\,=\,0.
\label{ur1}
\end{equation}

For the last two terms in the integral on the cone, we can argue as
\[
\aligned
\left|\int_{K(0,t)} \,\left[\frac{1-\frac{\alpha^2 \sin^ 2u}{r^2}}{w}-1\right]\,\frac{uu_r}{r} -\frac{\frac{ u\sin 2u}{2r^2}}{w}\right|\,\lesssim\,\int_{K(0,t)} \,\left|\frac{uu_r}{r}\right| + 1\\
\lesssim t^{3/2} \left(\int_{K(0,t)} \,\frac{\sin^2u}{r^2}u^2_r\right)^{1/2}\,+\,t^3,\endaligned
\]
from which we deduce
\begin{equation}
\lim_{T\to 0+}\frac{1}{T}\,\int_{K(0,T)} \,\left[\frac{1-\frac{\alpha^2 \sin^ 2u}{r^2}}{w}-1\right]\,\frac{uu_r}{r} -\frac{\frac{ u\sin 2u}{2r^2}}{w}\,=\,0.\label{ur2}
\end{equation}
Also, 
\[
\left|\int_{K(0,t)}\,\left(1-\frac{\frac{\alpha^2 u\sin 2u}{2r^2}}{w}\right)\,u_{t}^2 + \frac{\frac{\alpha^2 u\sin 2u}{2r^2}}{w}u_r^2\right|\,\lesssim\,\int_{K(0,t)}\,u_t^2 + \frac{\alpha^2\sin^2u}{r^2}u^2_r
\] 
implies, based on \eqref{utsinur},
\begin{equation}
\lim_{T\to 0+}\,\frac{1}{T}\,\int_{K(0,T)} \,\left(1-\frac{\frac{\alpha^2 u\sin 2u}{2r^2}}{w}\right)\,u_{t}^2 + \frac{\frac{\alpha^2 u\sin 2u}{2r^2}}{w}u_r^2\,=\,0.\label{ur3}
\end{equation}

Thus, taking in \eqref{ur0} first $t_1\to 0$, then dividing the limiting result by $t_2$, and finally allowing $t_2\to 0$, we conclude, based on \eqref{ur1}-\eqref{ur3}, that:
\begin{equation}
\lim_{T\to 0+}\,\frac{1}{T}\int_{K(0,T)}\,u_r^2\,=\,0.\label{ur}
\end{equation}

\subsection{Non-concentration of energy for the $ \frac{\sin^2u}{r^2}u^2_t$ density}
For this last term we use \eqref{abc} with $(a,b,c)=(0,0,1)$ and $h(u)=\sin u$ to infer, after integrating on $K(t_1,t_2)$:
\begin{equation}
\aligned
\int_{\Sigma(t_2)}\,w \sin u\,u_t\,&+\,\int_{K(t_1,t_2)}\,\left(w\cos u + \frac{\alpha^2\sin u \sin 2u}{2r^2}\right)(u_r^2-u_t^2) + \frac{\sin u \sin 2u}{2r^2}\\
&=\,\int_{\Sigma(t_1)}\,w \sin u\,u_t\,+\,\frac{1}{\sqrt 2}\int_{C(t_1,t_2)}\,w \sin u\,(u_t+u_r).
\endaligned
\label{ut0}\end{equation}

We will be working as above, with $t_2$ sufficiently small, such that we can use \eqref{dcy} and deduce for $t \leq t_2$:
\begin{equation}
\aligned
\Bigg|\int_{K(0,t)}\,\left(w\cos u + \frac{\alpha^2\sin u \sin 2u}{2r^2}\right)(u_r^2-u_t^2) &+ \frac{\sin u \sin 2u}{2r^2}\Bigg|\,\lesssim\,\int_{K(0,t)} \,e\\
&\lesssim\,t\,E(t),
\endaligned
\label{ut1}\end{equation}
\begin{equation}
\left|\int_{\Sigma(t)}\,w \sin u\,u_t\right|\,\lesssim\,\left(\int_{\Sigma(t)}\,w \sin^2 u\right)^{1/2}\,\left(\int_{\Sigma(t)}\,w \,u_t^2\right)^{1/2}\,\lesssim\,t\,E(t)^{1/2},\label{ut2}\end{equation}
\begin{equation}
\aligned
\left|\int_{C(0,t)}\,w \sin u\,(u_t+u_r)\right|\,&\lesssim\,\left(\int_{C(0,t)}\,w \sin^2 u\right)^{1/2}\,\left(\int_{\Sigma(t)}\,w \,(u_t+u_r)^2\right)^{1/2}\\
&\lesssim\,t\,F(0,t)^{1/2}.\endaligned
\label{ut3}\end{equation}
These estimates allow us to apply our general strategy (i.e., first $t_1\to 0$, then divide by $t_2$, followed by $t_2\to 0$) and obtain
\[
\aligned
\lim_{T\to 0+}\ \frac{1}{T} &\int_{\Sigma(T)}\,w \sin u\,u_t\\
&+\,\frac{1}{T}\int_{K(0,T)}\,\left(w\cos u + \frac{\alpha^2\sin u \sin 2u}{2r^2}\right)(u_r^2-u_t^2) + \frac{\sin u \sin 2u}{2r^2}\,=\,0.
\endaligned\]

Using \eqref{sinu}, \eqref{utsinur}, and \eqref{ur}, we can further strip down terms from the previous limit, which leads to
\begin{equation}
\lim_{T\to 0+}\ \frac{1}{T}\int_{\Sigma(T)}\,w \sin u\,u_t\,-\,\frac{1}{T}\int_{K(0,T)}\,\frac{2\alpha^2\sin^2 u}{r^2}\,u_t^2\,=\,0.
\label{ut4}\end{equation}
We see that \eqref{ut2} is not enough to carry the day and this is why we have to do a finer analysis of the first term in \eqref{ut4}. From \eqref{utsinur} we obtain the existence of a sequence $(t_n)_n\to 0$ satisfying \eqref{sinur}, hence we can use the finer bound \eqref{dcy3} to infer
\[
\int_{\Sigma(t_n)}\,w \sin^2 u \,\lesssim\, \int_{\Sigma(t_n)} C_n^2\, r +C_n^4 \,\lesssim\, C_n^2 \,t_n^3 +C_n^4\, t_n^2, 
\]
which implies, based on \eqref{ut2}, that
\[
\lim_{n\to \infty}\ \frac{1}{t_n} \,\int_{\Sigma(t_n)}\,w \sin u\,u_t\,=\,0.
\]
Coupling this limit with \eqref{ut4}, \eqref{neq}, \eqref{sinu}, \eqref{utsinur}, and \eqref{ur}, we conclude that
\begin{equation}
\lim_{T\to 0+}\ \frac{1}{T}\int_{K(0,T)}\,\frac{\alpha^2\sin^2u}{r^2}u_t^2\,=\,0,\label{utsinut}
\end{equation}
which finishes the proof of Theorem \ref{eth}.

\section*{Acknowledgements}
We thank Manoussos Grillakis and Sarada Rajeev for stimulating discussions during various stages of this project. Both authors was supported in part by the National Science Foundation Career grant DMS-0747656.

\bibliographystyle{amsplain}
\bibliography{anwb}

\providecommand{\bysame}{\leavevmode\hbox to3em{\hrulefill}\thinspace}
\providecommand{\MR}{\relax\ifhmode\unskip\space\fi MR }
\providecommand{\MRhref}[2]{%
  \href{http://www.ams.org/mathscinet-getitem?mr=#1}{#2}
}
\providecommand{\href}[2]{#2}
\begin{thebibliography}{10}

\bibitem{BCR}
P.~Bizo{\'n}, T.~Chmaj, and A.~Rostworowski, \emph{Asymptotic stability of the
  skyrmion}, Phys. Rev. D \textbf{75} (2007), no.~12, 121702--121706.

\bibitem{MR2175916}
P.~D'Ancona and V.~Georgiev, \emph{Wave maps and ill-posedness of their
  {C}auchy problem}, New trends in the theory of hyperbolic equations, Oper.
  Theory Adv. Appl., vol. 159, Birkh\"auser, Basel, 2005, pp.~1--111.

\bibitem{GNR}
D.-A. Geba, K.~Nakanishi, and S.~G. Rajeev, \emph{Global well-posedness and
  scattering for {S}kyrme wave maps}, preprint, 2011.

\bibitem{MR0140316}
M.~Gell-Mann and M.~L{\'e}vy, \emph{The axial vector current in beta decay},
  Nuovo Cimento (10) \textbf{16} (1960), 705--726.

\bibitem{MR0118418}
F.~G{\"u}rsey, \emph{On the symmetries of strong and weak interactions}, Nuovo
  Cimento (10) \textbf{16} (1960), 230--240.

\bibitem{MR0159685}
\bysame, \emph{On the structure and parity of weak interaction currents}, Ann.
  Physics \textbf{12} (1961), no.~1, 91--117.

\bibitem{MR1913803}
F.~H{\'e}lein, \emph{Harmonic maps, conservation laws and moving frames},
  second ed., Cambridge Tracts in Mathematics, vol. 150, Cambridge University
  Press, Cambridge, 2002, Translated from the 1996 French original, With a
  foreword by James Eells.

\bibitem{MR2488946}
J.~Krieger, \emph{Global regularity and singularity development for wave maps},
  Surveys in differential geometry. {V}ol. {XII}. {G}eometric flows, Surv.
  Differ. Geom., vol.~12, Int. Press, Somerville, MA, 2008, pp.~167--201.

\bibitem{MR2372807}
J.~Krieger, W.~Schlag, and D.~Tataru, \emph{Renormalization and blow up for
  charge one equivariant critical wave maps}, Invent. Math. \textbf{171}
  (2008), no.~3, 543--615.

\bibitem{MR1449544}
A.~N. Leznov, B.~Piette, and W.~J. Zakrzewski, \emph{On the integrability of
  pure {S}kyrme models in two dimensions}, J. Math. Phys. \textbf{38} (1997),
  no.~6, 3007--3011.

\bibitem{MR2070206}
F.~Lin and Y.~Yang, \emph{Existence of two-dimensional skyrmions via the
  concentration-compactness method}, Comm. Pure Appl. Math. \textbf{57} (2004),
  no.~10, 1332--1351.

\bibitem{MR2376667}
\bysame, \emph{Analysis on {F}addeev knots and {S}kyrme solitons: recent
  progress and open problems}, Perspectives in nonlinear partial differential
  equations, Contemp. Math., vol. 446, Amer. Math. Soc., Providence, RI, 2007,
  pp.~319--344.

\bibitem{RR}
P.~Raphael and I.~Rodnianski, \emph{Stable blow up dynamics for the critical
  co-rotational wave maps and equivariant {Y}ang-{M}ills problems}, preprint,
  arXiv:0911.0692.

\bibitem{MR2680419}
I.~Rodnianski and J.~Sterbenz, \emph{On the formation of singularities in the
  critical {${\rm O}(3)$} {$\sigma$}-model}, Ann. of Math. (2) \textbf{172}
  (2010), no.~1, 187--242.

\bibitem{MR933231}
J.~Shatah, \emph{Weak solutions and development of singularities of the {${\rm
  SU}(2)$} {$\sigma$}-model}, Comm. Pure Appl. Math. \textbf{41} (1988), no.~4,
  459--469.

\bibitem{MR1168115}
J.~Shatah and A.~Tahvildar-Zadeh, \emph{Regularity of harmonic maps from the
  {M}inkowski space into rotationally symmetric manifolds}, Comm. Pure Appl.
  Math. \textbf{45} (1992), no.~8, 947--971.

\bibitem{MR0128862}
T.~H.~R. Skyrme, \emph{A non-linear field theory}, Proc. Roy. Soc. London Ser.
  A \textbf{260} (1961), 127--138.

\bibitem{MR0153323}
\bysame, \emph{Particle states of a quantized meson field}, Proc. Roy. Soc.
  Ser. A \textbf{262} (1961), 237--245.

\bibitem{MR0138394}
\bysame, \emph{A unified field theory of mesons and baryons}, Nuclear Phys.
  \textbf{31} (1962), 556--569.

\bibitem{MR2043751}
D.~Tataru, \emph{The wave maps equation}, Bull. Amer. Math. Soc. (N.S.)
  \textbf{41} (2004), no.~2, 185--204.

\end{thebibliography}

\end{document}